\title[Diederich--Fornaess index of weakly pseudoconvex domains]
{A global estimate for the Diederich--Fornaess index of weakly pseudoconvex domains}
\author{Masanori Adachi}
\address{Graduate~School~of~Mathematics, Nagoya~University, Furo-cho Chikusa-ku Nagoya 464-8602, Japan}
\email{m08002z@math.nagoya-u.ac.jp}
\author{Judith Brinkschulte}
\address{Universit\"at Leipzig, Mathematisches Institut, PF 100920, D-04009 Leipzig, Germany}
\email{brinkschulte@math.uni-leipzig.de}
\subjclass[2010]{Primary~32V15, Secondary~32V40.}
\keywords{Diederich--Fornaess index, $\opa$-equation with regularity on pseudoconvex domains}
\date{March 26, 2015.}
\thanks{This is the author's final version of an article accepted for publication in Nagoya Mathematical Journal. The editorial board of Nagoya Mathematical Journal is the copyright holder of the published version of this article.}
\newcommand{\pa}{\partial}
\newcommand{\opa}{\overline\pa}
\newcommand{\ol}{\overline }
\newcommand\C{\mathbb{C}}  
\newcommand\R{\mathbb{R}}
\newcommand\PP{\mathbb{P}}
\newsavebox{\@brx}
\newcommand{\llangle}[1][]{\savebox{\@brx}{\(\m@th{#1\langle}\)}%
  \mathopen{\copy\@brx\kern-0.5\wd\@brx\usebox{\@brx}}}
\newcommand{\rrangle}[1][]{\savebox{\@brx}{\(\m@th{#1\rangle}\)}%
  \mathclose{\copy\@brx\kern-0.5\wd\@brx\usebox{\@brx}}}
\newtheorem*{MainTheorem}{Main Theorem}
\newtheorem*{Claim}{Claim}
\newtheorem{Theorem}{Theorem}[section]
\newtheorem{Lemma}[Theorem]{Lemma}
\newtheorem{Corollary}[Theorem]{Corollary}
\numberwithin{equation}{section}
\begin{document}
\maketitle

%%%%%%%%%%%%%%%%%%%%%%%%%%%%%%%%%%%%%%%%%%%%%%%%%%%%%%%%%%%%%%%%%%%%%%%%%%%%%%%%%%%
\begin{abstract}
A uniform upper bound for the Diederich--Fornaess index is given for weakly pseudoconvex domains 
whose Levi-form of the boundary vanishes in $\ell$-directions everywhere.
\end{abstract}

%%%%%%%%%%%%%%%%%%%%%%%%%%%%%%%%%%%%%%%%%%%%%%%%%%%%%%%%%%%%%%%%%%%%%%%%%%%%%%%%%%%
\section{Introduction}

The aim of this paper is to reveal a relation between the Diederich--Fornaess index
of weakly pseudoconvex domains and the rank of the Levi-form of their boundaries.

Let us first recall the definition of the Diederich--Fornaess index. 
Consider a complex manifold $X$ and a relatively compact domain $\Omega \Subset X$ with $\mathcal{C}^2$-smooth boundary.
A defining function of $\Omega$ is 
a $\mathcal{C}^2$-smooth function $\rho: \ol\Omega \rightarrow\R$ satisfying $\Omega = \{\rho < 0 \}$ and 
whose gradient does not vanish on $\pa\Omega$. In order to avoid too many minus signs in this paper, 
 we will associate to a fixed defining function $\rho$ the nonnegative function $\hat{\delta} =\hat{\delta}_\rho = -\rho$, which can be thought of as a boundary distance function of $\Omega$ with respect to a certain hermitian metric on $X$ (depending on $\rho$).

The {\em Diederich--Fornaess exponent} $\eta_{\hat{\delta}}$ of a defining function $-{\hat{\delta}}$ is the supremum of $\eta \in (0, 1)$ such that 
$-{\hat{\delta}}^{\eta}$ is a bounded, strictly plurisubharmonic exhaustion function of $\Omega$. 
If there is no such $\eta$, we let $\eta_{\hat{\delta}} := 0$.
The {\em Diederich--Fornaess index} $\eta(\Omega)$ of $\Omega$ is the supremum of the Diederich--Fornaess exponents of defining functions of $\Omega$.

The Diederich--Fornaess index is a numerical index on 
the strength of a certain pseudoconvexity, more precisely that of hyperconvexity.
If $\pa \Omega$ is strictly pseudoconvex, we know that $\pa \Omega$ admits a strictly plurisubharmonic defining function, 
hence, $\eta(\Omega) = 1$. 
In order for $\Omega$ to have positive $\eta(\Omega)$, $\Omega$ must be Stein, and we need more in fact:
A theorem of Ohsawa--Sibony (\cite{OSi}; see also \cite{HSh}) tells us that $\eta_{\hat{\delta}} > 0$ 
if and only if $i\pa\opa(- \log {\hat{\delta}}) \geq \omega_0$ in $\Omega$ for some hermitian metric $\omega_0$ of $X$. 
The domains $\Omega$ with positive $\eta(\Omega)$ should carry such a special exhaustion 
as if they are proper pseudoconvex domains in $X=\mathbb{CP}^n$, where Takeuchi's theorem guarantees this kind of exhaustions.
Many techniques using such exhaustions have been developed for solving the $\opa$-equation on weakly pseudoconvex domains, 
see for example \cite{BCh}, \cite{Br}, \cite{CShW}, \cite{HSh}, \cite{CSh}.

Let us give several examples to illustrate the situation we are considering. 
In the celebrated series of works \cite{DiFo1}, \cite{DiFo2} of Diederich and Fornaess,
they showed that if $X$ is Stein, $\eta(\Omega) > 0$ for any domain $\Omega \Subset X$ with $\mathcal{C}^2$-smooth pseudoconvex boundary. 
Note that in this situation $\pa\Omega$ must have a strictly pseudoconvex point, 
for we can find a level set of a strictly plurisubharmonic exhaustion of $X$ touching $\pa\Omega$ at some points and bounding $\Omega$. 
They also showed that for any $\varepsilon > 0$, there is $\Omega \Subset X=\C^2$ with $0 < \eta(\Omega) < \varepsilon$
by using the worm domains, where a Levi-flat portion sits on $\pa\Omega$.   
The first author proved in \cite{A} that certain holomorphic disc bundles $\Omega$ over compact Riemann surfaces in their associated 
flat ruled surfaces $X$ satisfy $\eta(\Omega) > 0$ even though $\pa\Omega$ is totally Levi-flat. 

% for any boundary point, we can find a neighborhood $U$ where $\eta(U \cap \Omega)$ is arbitrarily close to 1.

A natural question therefore is to ask to what extent the Diederich--Fornaess exponent gets smaller 
when $\pa\Omega$ is nearly Levi-flat everywhere. 
Our answer is the following 

%%%%%
\begin{MainTheorem}
Let $X$ be a complex manifold of dimension $n \geq 2$ and $\Omega \Subset X$ a relatively compact domain with $\mathcal{C}^3$-smooth boundary. 
Assume that the Levi-form of the boundary $\pa\Omega$ has at least $\ell$ zero eigenvalues everywhere on $\pa\Omega$ where $0\leq\ell\leq n-1$. 
Then $\eta(\Omega) \leq \frac{n-\ell}{n}$. 
\end{MainTheorem}

In particular, we obtain 
\begin{Corollary}
If  $\eta(\Omega) > \frac{1}{n}$, then $\pa\Omega$ is not Levi-flat.
\end{Corollary}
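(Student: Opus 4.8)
The plan is to prove the Main Theorem, of which the Corollary is the special case $\ell=n-1$, since Levi-flatness of $\pa\Omega$ means precisely that all $n-1$ eigenvalues of its Levi-form vanish everywhere. For the Main Theorem the idea is to integrate the complex Monge--Amp\`ere form of the strictly plurisubharmonic function $-\hat{\delta}^{\eta}$ over the sublevel sets of a defining function and to extract the required decay of the resulting boundary integral from the degeneracy of the Levi-form. We may assume $\eta(\Omega)>0$ and fix a defining function $\rho$ (of class $\mathcal{C}^{3}$, matching the regularity of $\pa\Omega$) together with an exponent $\eta\in(0,1)$ arbitrarily close to $\eta(\Omega)$ such that $u:=-\hat{\delta}^{\eta}=-(-\rho)^{\eta}$ is a bounded strictly plurisubharmonic exhaustion of $\Omega$. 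A direct computation gives in $\Omega$
\[
 i\pa\opa u=\eta\,\hat{\delta}^{\eta-1}\,i\pa\opa\rho+\eta(1-\eta)\,\hat{\delta}^{\eta-2}\,i\pa\rho\wedge\opa\rho,\qquad d^{c}u=\eta\,\hat{\delta}^{\eta-1}\,d^{c}\rho,
\]
where $d^{c}:=\tfrac{i}{2}(\opa-\pa)$, so $dd^{c}=i\pa\opa$. The decisive point is that $u$ is smooth in $\Omega$ and continuous on $\ol\Omega$ but, since $\eta<1$, its gradient blows up like $\hat{\delta}^{\eta-1}$ near $\pa\Omega$, so $u$ is not $\mathcal{C}^{1}$ up to the boundary; for this reason I would work on the smooth, strictly pseudoconvex domains $\Omega_{\varepsilon}:=\{\rho<-\varepsilon\}\Subset\Omega$ for small $\varepsilon>0$, on which $(i\pa\opa u)^{n}$ is a positive smooth volume form.

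From $(i\pa\opa u)^{n}=d\bigl(d^{c}u\wedge(i\pa\opa u)^{n-1}\bigr)$ and Stokes' theorem on $\Omega_{\varepsilon}$, the integral of $(i\pa\opa u)^{n}$ over $\Omega_{\varepsilon}$ equals a boundary integral over $\{\rho=-\varepsilon\}$. Expanding $(i\pa\opa u)^{n-1}$ with the formula above and using $(i\pa\rho\wedge\opa\rho)^{2}=0$, the only terms are proportional to $(i\pa\opa\rho)^{n-1}$ and to $(i\pa\opa\rho)^{n-2}\wedge i\pa\rho\wedge\opa\rho$; the latter dies upon restriction to $\{\rho=-\varepsilon\}$, because there $\rho$ is constant, whence $j^{*}\pa\rho=-j^{*}\opa\rho=:-\omega$ and $j^{*}(i\pa\rho\wedge\opa\rho)=-i\,\omega\wedge\omega=0$. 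This leaves the clean identity
\[
 \int_{\Omega_{\varepsilon}}(i\pa\opa u)^{n}=\eta^{n}\,\varepsilon^{\,n(\eta-1)}\int_{\{\rho=-\varepsilon\}}d^{c}\rho\wedge(i\pa\opa\rho)^{n-1}.
\]

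The boundary integrand on the right is, up to a fixed positive constant, the determinant of the Levi-form of the level set $\{\rho=-\varepsilon\}$ times its surface measure. As $\varepsilon\to0$ these Levi-forms converge in $\mathcal{C}^{1}$ to the Levi-form of $\pa\Omega$, which by hypothesis has at least $\ell$ zero eigenvalues at every point. Since a $\mathcal{C}^{1}$ family $t\mapsto M(t)$ of Hermitian matrices with $M(0)$ having at least $\ell$ zero eigenvalues satisfies $\det M(t)=O(t^{\ell})$ (take the Schur complement with respect to the zero eigenspace of $M(0)$), a compactness argument over $\pa\Omega$ should yield $\int_{\{\rho=-\varepsilon\}}d^{c}\rho\wedge(i\pa\opa\rho)^{n-1}=O(\varepsilon^{\ell})$.

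Finally, $(i\pa\opa u)^{n}>0$ in $\Omega$ makes $\varepsilon\mapsto\int_{\Omega_{\varepsilon}}(i\pa\opa u)^{n}$ positive and nondecreasing as $\varepsilon\downarrow0$, hence bounded below by a constant $c>0$; together with the identity and the bound above this forces $c\le C\,\varepsilon^{\,n(\eta-1)+\ell}$ for all small $\varepsilon>0$, which is possible only if $n(\eta-1)+\ell\le 0$, i.e.\ $\eta\le\frac{n-\ell}{n}$. Letting $\eta\nearrow\eta(\Omega)$ then gives the theorem. I expect the main obstacle to be the uniform decay estimate $O(\varepsilon^{\ell})$: one must control, simultaneously and at a uniform rate over all of $\pa\Omega$, the $\ell$ (or more) Levi-eigenvalues of the level sets $\{\rho=-\varepsilon\}$ that collapse as $\varepsilon\to 0$, which is where the $\mathcal{C}^{3}$ hypothesis and a careful treatment of the adjugate of the Levi-form are needed. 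A minor point, already incorporated, is the justification of Stokes' theorem despite the boundary singularity of $u$, handled by remaining on $\Omega_{\varepsilon}$ and letting $\varepsilon\to0$.
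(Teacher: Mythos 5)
Your proposal is correct and takes a genuinely different route from the paper. The paper derives the Main Theorem (of which the Corollary is the case $\ell=n-1$) by duality: it solves a $\opa$-Cauchy problem with support in $\ol\Omega$ in top degree (Theorem \ref{vanishing}) via a Donnelly--Fefferman type weighted $L^2$-estimate (Theorem \ref{donnelly-fefferman}) in the auxiliary K\"ahler metric $\omega=i\pa\opa(-\hat{\delta}^\eta)$, and then contradicts the non-vanishing of the top-degree compactly supported cohomology of $X$; the Levi-degeneracy enters through Lemmas 3.1 and \ref{formdev} controlling the boundary behaviour of $\omega$. You instead apply Stokes' theorem directly to the Monge--Amp\`ere form $(i\pa\opa u)^n$ of $u=-\hat{\delta}^\eta$ over $\Omega_\varepsilon$, which needs none of the $L^2$-machinery and is more elementary. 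The crux you flag --- the uniform decay $O(\varepsilon^\ell)$ of the Levi-determinant of the level sets --- does hold under the $\mathcal{C}^3$ hypothesis: $i\pa\opa\rho$ is then $\mathcal{C}^1$, and a $\mathcal{C}^1$ (Gram--Schmidt) orthonormal frame of the varying complex tangent spaces exhibits the Levi matrix on $\{\rho=-\varepsilon\}$ as an $O(\varepsilon)$ perturbation, with constant controlled by $\|\rho\|_{\mathcal{C}^3}$, of a boundary Levi matrix having at least $\ell$ zero eigenvalues; expanding the determinant multilinearly along that kernel yields the factor $\varepsilon^\ell$ uniformly over the compact boundary. The remaining bookkeeping is sound: both sides of your boundary identity are positive, the left side $\int_{\Omega_\varepsilon}(i\pa\opa u)^n$ is nondecreasing hence bounded below by a fixed $c>0$, so $c\le C\varepsilon^{\,n\eta-n+\ell}$ forces $n\eta-n+\ell\le 0$, i.e.\ $\eta\le\tfrac{n-\ell}{n}$. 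The tradeoff is that the paper's route also produces the $L^2$ vanishing statement of Theorem \ref{vanishing}, of independent interest, whereas your argument is self-contained and closer in spirit to the Fu--Shaw approach mentioned in the acknowledgements.
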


\begin{Corollary}
If $\eta(\Omega) > \frac{n-1}{n}$, then $\pa\Omega$ has a strictly pseudoconvex point.\\
\end{Corollary}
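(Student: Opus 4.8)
The plan is to prove the Main Theorem; the two Corollaries then follow at once, since a Levi-flat boundary is exactly the case $\ell=n-1$, while a pseudoconvex boundary with no strictly pseudoconvex point has Levi-form with at least one zero eigenvalue everywhere (the case $\ell=1$), and $\eta(\Omega)>\tfrac{n-1}{n}>0$ already forces $\Omega$ to carry a bounded plurisubharmonic exhaustion, hence to be pseudoconvex, so that its Levi-form is indeed $\ge 0$.

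So assume $\eta<\eta(\Omega)$; then there is a $\mathcal{C}^3$-smooth defining function $-\hat\delta$ with $-\hat\delta^{\eta}$ strictly plurisubharmonic on $\Omega$, and I would first record the standard identity
\[
i\pa\opa(-\hat\delta^{\eta})=\eta\,\hat\delta^{\eta-2}\bigl(-\hat\delta\,i\pa\opa\hat\delta+(1-\eta)\,i\pa\hat\delta\wedge\opa\hat\delta\bigr),
\]
so that strict plurisubharmonicity of $-\hat\delta^{\eta}$ is equivalent to positive-definiteness on $\Omega$ of the Hermitian form $Q_{\eta}:=-\hat\delta\,i\pa\opa\hat\delta+(1-\eta)\,i\pa\hat\delta\wedge\opa\hat\delta$. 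Writing $Q_{\eta}=Q_{0}-\eta\,i\pa\hat\delta\wedge\opa\hat\delta$ with $Q_{0}=\hat\delta^{2}\,i\pa\opa(-\log\hat\delta)>0$ (it is $Q_\eta$ plus a nonnegative form), and using that $i\pa\hat\delta\wedge\opa\hat\delta$ has rank one, positivity of $Q_{\eta}$ is equivalent to the pointwise scalar inequality $\eta\,\langle Q_{0}^{-1}\pa\hat\delta,\pa\hat\delta\rangle<1$ on $\Omega$. Thus the Main Theorem reduces to showing that, under the Levi hypothesis, $\sup_{\Omega}\langle Q_{0}^{-1}\pa\hat\delta,\pa\hat\delta\rangle\ge \tfrac{n}{n-\ell}$ for every defining function.

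The second step is a boundary analysis. Fix $p\in\pa\Omega$ and choose adapted local holomorphic coordinates at $p$; the hypothesis furnishes $\ell$ independent null directions $W_{1},\dots,W_{\ell}$ of the Levi-form at $p$, complementary to a complex normal direction $N$. Restricting $Q_{\eta}$ to the $(\ell+1)$-plane spanned by $W_{1},\dots,W_{\ell},N$ and letting the base point tend to $p$ from inside $\Omega$, the null–null block of $Q_{\eta}$ vanishes to order $\hat\delta^{2}$, the null–normal entries to order $\hat\delta$, and the normal–normal entry tends to $1-\eta$; dividing out the appropriate powers of $\hat\delta$ and passing to the limit, positive-definiteness of the limiting $(\ell+1)\times(\ell+1)$ matrix forces
\[
1-\eta\ \ge\ \sum_{j=1}^{\ell}\frac{|b_{j}(p)|^{2}}{\kappa_{j}(p)},
\]
where $b_{j}(p)=(i\pa\opa\hat\delta)(W_{j},\overline{N})$ and $\kappa_{j}(p)>0$ is the transversal rate of growth into $\Omega$ of the $j$-th Levi eigenvalue of the level hypersurfaces of $\hat\delta$ (legitimate because those level sets $\{\hat\delta=t\}$ are strictly pseudoconvex once $-\hat\delta^{\eta}$ is strictly plurisubharmonic); equivalently this recomputes $\lim_{p'\to p}\langle Q_{0}^{-1}\pa\hat\delta,\pa\hat\delta\rangle=\bigl(1-\sum_{j}|b_{j}|^{2}/\kappa_{j}\bigr)^{-1}$.

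The last step, which I expect to be the hard part, is global. Each $b_{j}$ can be made to vanish at a single prescribed boundary point by replacing $\hat\delta$ with $e^{\psi}\hat\delta$, which alters $b_{j}$ by the tangential derivative $\pa\psi(W_{j})$; hence the pointwise inequality above is vacuous in isolation, and the constant $\tfrac{n-\ell}{n}$ must come from an obstruction to killing the $b_{j}$ simultaneously along the compact hypersurface $\pa\Omega$. The $b_{j}$ assemble, modulo these gauge changes, into a cohomological datum on the Levi-null distribution of $\pa\Omega$ — a higher-rank analogue of D'Angelo's one-form, the same object responsible for the smallness of the Diederich--Fornaess index of worm domains — and I would bound its size by integrating the inequality of Step 2 against a suitable positive $(2n-1)$-form on $\pa\Omega$, using Stokes' theorem to discard the exact part and compactness of $\pa\Omega$ to extract the universal constant. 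This should give $\sup_{p\in\pa\Omega}\sum_{j=1}^{\ell}|b_{j}(p)|^{2}/\kappa_{j}(p)\ge \tfrac{\ell}{n}$ for every defining function, whence $\eta\le 1-\tfrac{\ell}{n}=\tfrac{n-\ell}{n}$ and, letting $\eta\uparrow\eta(\Omega)$, the Main Theorem. (If the rank of the Levi-form is not constant, one works first on the open dense set where it is locally constant, or perturbs, before integrating.)
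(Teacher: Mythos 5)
Your deduction of the Corollary from the Main Theorem (taking $\ell=1$, and noting that $\eta(\Omega)>0$ already forces pseudoconvexity so that ``no strictly pseudoconvex point'' really does yield a zero Levi-eigenvalue everywhere rather than a negative one) is exactly the paper's implicit argument, and is correct. But you then go on to propose a proof of the Main Theorem itself, and that proposal is where the problems are. Your route is genuinely different from the paper's: you attempt a direct local/global analysis of the complex Hessian of $-\hat\delta^{\eta}$, reducing strict plurisubharmonicity to the scalar condition $\eta\langle Q_0^{-1}\pa\hat\delta,\pa\hat\delta\rangle<1$, extracting a boundary-limit inequality in the Levi-null plus normal directions, and then hoping that a Stokes-type integration over $\pa\Omega$ produces the universal constant $\ell/n$. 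The paper instead equips $\Omega$ with the K\"ahler metric $\omega=i\pa\opa(-\hat\delta^{\eta})$, proves a Donnelly--Fefferman-type a priori estimate to solve $\opa u=f$ in $L^2_{n,n-1}(\Omega,\hat\delta^{-\eta},\omega)$ for any compactly supported top-degree $f$, shows via the volume and form estimates of Lemmas 3.1--3.2 that the zero-extension of $u$ is a current on $X$ solving $\opa T=f$ there, and then derives the contradiction $1=\int f=\langle T,\opa\chi\rangle=0$. That argument never needs to understand the pointwise structure of $b_j$ and $\kappa_j$ at all.

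The genuine gap is your Step 3, which you yourself flag as ``the hard part'' and then do not carry out. You assert that the off-diagonal Levi data $b_j=(i\pa\opa\hat\delta)(W_j,\overline N)$ assemble into a cohomological obstruction on the Levi-null distribution, and that integrating the pointwise inequality of Step 2 against a suitable $(2n-1)$-form with Stokes' theorem will give $\sup_{\pa\Omega}\sum_j|b_j|^2/\kappa_j\geq\ell/n$. None of this is demonstrated, and it is not clear it can be: the quantity $\sum_j|b_j|^2/\kappa_j$ is quadratic in $b_j$, so the gauge change $b_j\mapsto b_j+W_j(\psi)$ under $\hat\delta\mapsto e^{\psi}\hat\delta$ does not simply shift it by an exact term that Stokes can kill; moreover $\kappa_j$ itself is defining-function dependent and changes under the same gauge, so the ``exact part'' you want to discard is not an exact form in any obvious sense. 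You also gloss over the fact that the Levi-null distribution need not be smooth or of constant rank, which you acknowledge parenthetically but do not resolve. As it stands, the only parts of the argument that are actually proved are the algebraic reduction in Step 1 and (roughly) the boundary Schur-complement inequality in Step 2; the global step that would produce the sharp constant $\frac{n-\ell}{n}$ is missing. If you want to prove the Main Theorem, the cleanest path is the paper's $L^2$ duality argument, which sidesteps the local obstruction theory entirely.
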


%%%%%
Let us explain the idea of our proof of Main Theorem. When $X$ is Stein, we found a strictly pseudoconvex point on $\pa\Omega$ 
by approximating $\pa\Omega$ at a point by strictly pseudoconvex real hypersurfaces from outside.
Since no such approximation exists in general, we use the following method inside: 
We assume by contradiction that $\eta(\Omega) > \frac{n-\ell}{n}$. 
Then we show in Theorem \ref{vanishing}, using weighted $L^2$-estimates, that any smooth, top-degree form with compact support in $\Omega$ 
is $\opa$-exact in the sense of currents on $X$. 
This is impossible essentially because the top-degree cohomology with compact support does not vanish.

%%%%%
For the proof of Theorem \ref{vanishing}, we use an estimate of Donnelly--Fefferman type (see \cite{DoFe}) to pass  from an $L^2$ vanishing result in $L^2_{n,n}(\Omega,{\hat{\delta}}^{\eta})$ to an 
$L^2$ vanishing result in $L^2_{n,n}(\Omega,{\hat{\delta}}^{-\eta})$. We also modify this argument by using a special  K\"ahler metric $\omega := i\pa\opa(-{\hat{\delta}}^\eta)$ in $\Omega$ for some $\eta \in (0, \eta_{\hat{\delta}})$. This metric respects the degeneracy of the Levi-form of $\pa\Omega$ in a certain manner and permits to prove that the trivial extension of this solution is in fact a solution on all of $X$.

\subsection*{Acknowledgements}
After this work was accomplished, the authors were kindly informed by Siqi Fu and Mei-Chi Shaw
that they had already reached the same result for a weaker assumption in \cite{FuSh}  
with a different technique. We are grateful to them and Takeo Ohsawa for communicating this information.

We also thank a referee for his/her careful reading and comments to improve the presentation of this paper.

%%%%%%%%%%%%%%%%%%%%%%%%%%%%%%%%%%%%%%%%%%%%%%%%%%%%%%%%%%%%%%%%%%%%%%%%%%%%%%%%%%%
\section{Preliminaries on $L^2$-estimate}

In this section we introduce some notations that are used in the sequel. Also, for the convenience of the reader, we recall some of the basic facts concerning a priori estimates and solvability results for the $\opa$ operator.\\

%%%%%
Let $X$ be a complex manifold equipped with a hermitian metric $\omega_0$ 
and $\Omega \subset X$ a domain with $\mathcal{C}^2$-smooth boundary. 
We let $-{\hat{\delta}}: \ol\Omega \rightarrow\R$ be a defining function.

We denote by $L^2_{p,q}(\Omega,{\hat{\delta}}^s)$ the Hilbert space of $(p,q)$-forms $u$  which satisfy
$$\Vert u\Vert_{{\hat{\delta}}^s}^2 := \int_\Omega \vert u\vert^2_{\omega_0} {\hat{\delta}}^s dV_{\omega_0} < + \infty.$$
Here $dV_{\omega_{0}}$ is the canonical volume element associated with the metric $\omega_{0}$, and $\vert\cdot\vert_{\omega_{0}}$ is the norm of $(p,q)$-forms induced by $\omega_{0}$. For $s=0$ the $L^2$-spaces just defined coincide with the usual $L^2$-spaces on $\Omega$; in this case, we will omit the index ${\hat{\delta}}^0$.\\

%%%%%
In our proofs it is sometimes necessary to replace the base metric $\omega_{0}$ by a different metric $\omega$.
The corresponding Hilbert spaces resp. norms will then be denoted by 
$L^2_{p,q}(\Omega,{\hat{\delta}}^s, \omega)$ resp. $\Vert \cdot\Vert_{{\hat{\delta}}^s,\omega}$.\\

%%%%%
For later use, we recall the well known Bochner--Kodaira--Nakano inequality for K\"ahler metrics for the special case of the trivial line bundle $\C$ on $\Omega$ equipped with a weight function $\varphi\in \mathcal{C}^2(\Omega)$, which is the key point when establishing $L^2$ existence theorems for the $\opa$ operator (see \cite{D}):

Let $\omega$ be a  K\"ahler metric on $\Omega$. Then for every $u\in \mathcal{D}^{p,q}(\Omega)$ we have
\begin{equation}
\label{BKN}
\Vert \opa u\Vert^2_{e^{-\varphi}} + \Vert \opa^\ast_{e^{-\varphi}} u\Vert^2_{e^{-\varphi}} 
\geq \llangle \lbrack i\pa\opa\varphi,\Lambda\rbrack u,u \rrangle_{e^{-\varphi}}. 
\end{equation}
Here $\Lambda$ is the adjoint of multiplication by $\omega$.\\

%%%%%
A standard computation for the curvature term yields that
\begin{equation}
\label{negativecurvature}
\langle \lbrack i\pa\opa\varphi,\Lambda\rbrack u,u \rangle \geq (\lambda_1 + \ldots + \lambda_q-\sum_{j=1}^n\lambda_j) \vert u\vert^2
\end{equation}
for any form $u\in \Lambda^{0,q}T^\ast \Omega$.
Here $\lambda_1\leq \ldots\leq \lambda_n$ are the eigenvalues of $i\pa\opa\varphi$ with respect to $\omega$.\\

%%%%%%%%%%%%%%%%%%%%%%%%%%%%%%%%%%%%%%%%%%%%%%%%%%%%%%%%%%%%%%%%%%%%%%%%%%%%%%%%%%%
\section{A special metric}

%%%%%
When $\Omega$ has a defining function $-{\hat{\delta}}$ with positive Diederich--Fornaess exponent $\eta_{\hat{\delta}}$, 
taking $0 < \eta < \eta_{\hat{\delta}}$,  
we will equip the domain $\Omega$ with another K\"ahler metric $\omega := i\pa\opa(-{\hat{\delta}}^\eta)$ different from $\omega_0$. 

Let us study the behavior of the metric $\omega$ near $\pa \Omega$ for later use. 
\begin{Lemma}
Suppose that $\pa\Omega$ is $\mathcal{C}^3$-smooth and 
the Levi-form of $\pa\Omega$ has at least $\ell$ zero eigenvalues everywhere. 
Then, we have 
\begin{equation}
\label{volumedev}
dV_\omega \lesssim {\hat{\delta}}^{n\eta -2 - (n - \ell - 1)} dV_{\omega_0}
\end{equation}
near $\pa \Omega$.
\end{Lemma}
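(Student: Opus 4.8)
The plan is to compute $dV_\omega$ directly in terms of $\hat\delta$ by estimating the eigenvalues of $\omega = i\pa\opa(-\hat\delta^\eta)$ with respect to $\omega_0$ near $\pa\Omega$, since $dV_\omega = \det(\text{eigenvalues}) \cdot dV_{\omega_0}$ where the determinant is taken of the matrix of $\omega$ relative to $\omega_0$. First I would expand
\begin{equation}
\label{expansion}
i\pa\opa(-\hat\delta^\eta) = \eta \hat\delta^{\eta-1} i\pa\opa\hat\delta + \eta(1-\eta)\hat\delta^{\eta-2} i\pa\hat\delta\wedge\opa\hat\delta.
\end{equation}
The second term is a rank-one form of size $\hat\delta^{\eta-2}$ in the ``normal'' direction $\opa\hat\delta$. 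For the first term, I would use that $\hat\delta$ is a $\mathcal{C}^3$ defining function, so $i\pa\opa\hat\delta$ is bounded near $\pa\Omega$, and moreover on the complex tangent space to the level sets of $\hat\delta$ its restriction is (up to a bounded factor depending on the metric normalization) the Levi form; by hypothesis the Levi form has at least $\ell$ zero eigenvalues everywhere on $\pa\Omega$, so by continuity $i\pa\opa\hat\delta$ restricted to the complex tangent space has at least $\ell$ eigenvalues that are $O(\hat\delta)$ (they vanish on the boundary) near $\pa\Omega$.

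Next I would organize the eigenvalue count. Work at a point near $\pa\Omega$ and choose an orthonormal (for $\omega_0$) frame adapted to the splitting $T^{1,0} = \C\cdot(\text{normal}) \oplus (\text{complex tangent})$. In the normal direction the dominant contribution is the term $\eta(1-\eta)\hat\delta^{\eta-2}|\pa\hat\delta|^2$, so that eigenvalue of $\omega$ is comparable to $\hat\delta^{\eta-2}$. In the complex tangential directions the contribution comes from $\eta\hat\delta^{\eta-1} i\pa\opa\hat\delta$: along the $\ell$ degenerate directions the Levi eigenvalue is $O(\hat\delta)$, giving eigenvalues of size $\hat\delta^{\eta-1}\cdot\hat\delta = \hat\delta^{\eta}$; along the remaining $n-1-\ell$ tangential directions the Levi eigenvalue is bounded (possibly nonzero), giving eigenvalues of size $\hat\delta^{\eta-1}$. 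One must also check that the off-diagonal / cross terms and the $\eta(1-\eta)\hat\delta^{\eta-2}i\pa\hat\delta\wedge\opa\hat\delta$ term restricted to tangential directions do not spoil this: the latter vanishes identically on the complex tangent space since it is $\pa\hat\delta\wedge\opa\hat\delta$ paired against tangent vectors, and the cross terms are lower order. Multiplying the eigenvalue sizes,
\begin{equation}
dV_\omega \lesssim \hat\delta^{\eta-2}\cdot (\hat\delta^{\eta})^{\ell}\cdot(\hat\delta^{\eta-1})^{n-1-\ell}\, dV_{\omega_0} = \hat\delta^{\,n\eta - 2 - (n-\ell-1)}\, dV_{\omega_0},
\end{equation}
which is exactly \eqref{volumedev}.

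The main obstacle I expect is making the eigenvalue estimates uniform and rigorous, in particular justifying that the $\ell$ small Levi eigenvalues are genuinely $O(\hat\delta)$ near $\pa\Omega$ rather than merely zero on $\pa\Omega$ — this uses $\mathcal{C}^3$-smoothness of $\pa\Omega$ so that the Levi form is $\mathcal{C}^1$ and one can control its rate of degeneration in the normal direction via a first-order Taylor estimate, together with the fact that at each boundary point at least $\ell$ eigenvalues vanish (a semicontinuity/min-max argument for the sum of the $\ell$ smallest eigenvalues). A secondary technical point is that we only need the upper bound ``$\lesssim$'', so we do not need the estimate to be sharp in every direction; it suffices to bound each tangential Levi eigenvalue by $C\hat\delta$ in the $\ell$ degenerate directions and by $C$ otherwise, and to bound $i\pa\opa\hat\delta$ and $|\pa\hat\delta|$ from above by constants — all of which follow from compactness of $\pa\Omega$ and continuity. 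Finally I would remark that the determinant of a Hermitian matrix is bounded above by the product of its diagonal entries in an orthonormal frame only after diagonalizing, so I would phrase the argument in terms of diagonalizing $\omega$ in an $\omega_0$-orthonormal frame and tracking the orders of the resulting eigenvalues as above.
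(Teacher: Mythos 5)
Your approach is genuinely different from the paper's. The paper works in a fixed $\mathcal{C}^2$ frame $\mathcal{Y}$ (built from coordinate fields and rotated at a base boundary point $p$ so that $Y_1,\dots,Y_\ell$ span the kernel of the Levi form at $p$), then runs Gram--Schmidt and estimates every entry $\mu_{j\ol k}$ and the change-of-base matrix inside a non-tangential cone $\Gamma_{p,\varepsilon}$, and finally bounds the determinant by expanding over permutations. You instead propose to diagonalize $\omega$ relative to $\omega_0$ pointwise and count eigenvalue orders; as a strategy this can be made to work and even sidesteps the cone construction (the sorted eigenvalues of $i\pa\opa\hat\delta|_{\ker\pa\hat\delta}$ are Lipschitz, so comparison to the nearest boundary point gives the $O(\hat\delta)$ bound without tracking a fixed degenerate frame). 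But as written there are two real problems.

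First, the claim ``the cross terms are lower order'' is false. In an $\omega_0$-orthonormal frame with $X_n$ normal and $X_1,\dots,X_\ell$ in the degenerate tangential directions, the entry $\omega(X_j,\ol X_n)$ for $j\le\ell$ is $\eta\hat\delta^{\eta-1}i\pa\opa\hat\delta(X_j,\ol X_n)$, which is generically of size $\hat\delta^{\eta-1}$. The permutation term swapping $j\leftrightarrow n$ contributes $|\omega(X_j,\ol X_n)|^2\prod_{k\neq j,n}\omega(X_k,\ol X_k)\sim\hat\delta^{2(\eta-1)}\hat\delta^{(\ell-1)\eta+(n-1-\ell)(\eta-1)}$, whose exponent is exactly $n\eta-2-(n-\ell-1)$ --- the same order as the diagonal term, not lower. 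The determinant bound is not saved by the cross terms being small; it is saved by positivity. Either invoke Hadamard's inequality $\det A\le\prod_j A_{jj}$ for the positive definite matrix of $\omega$ in that adapted orthonormal frame, or use Cauchy interlacing (restrict $\omega$ to the $\ell$-dimensional degenerate subspace to control the $\ell$ smallest eigenvalues, then to the $(n-1)$-dimensional complex tangent space to control the next $n-1-\ell$). The paper, by contrast, actually proves the sharper estimate $|\mu_{j\ol k}|\lesssim\hat\delta^{\eta}$ for $j\le\ell$, $k\le n-1$ (and symmetrically), using that the boundary Levi form is nonnegative so Cauchy--Schwarz kills those cross terms at $p$, and then Taylor-expanding along the cone; with that sharper bound the permutation expansion closes directly.

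Second, ``first-order Taylor estimate in the normal direction'' is too loose as a justification for the uniform $O(\hat\delta)$ bound. What you actually need is that the sorted eigenvalue functions $\nu_1\le\dots\le\nu_{n-1}$ of $i\pa\opa\hat\delta|_{\ker\pa\hat\delta}$ are Lipschitz (because the restricted form is $\mathcal{C}^1$, which is where $\mathcal{C}^3$-smoothness of $\hat\delta$ enters) and vanish for $j\le\ell$ on the whole boundary, so comparing $z$ to its nearest boundary point gives $|\nu_j(z)|\lesssim\hat\delta(z)$ uniformly. The paper avoids this by fixing the degenerate frame at $p$ and restricting to the non-tangential cone $\Gamma_{p,\varepsilon}$ where $|z-p|<2\hat\delta(z)$; that construction is also what makes their $\hat\delta^{\eta}$ estimate on the cross terms rigorous, and it is reused in Lemma~\ref{formdev}, which your determinant-only argument would not supply.
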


\begin{proof}
First fix a finite covering of $\pa\Omega$ by holomorphic charts $\{ (U; z_U) \}$ 
equipped with the Euclidean metrics $\omega_{U}$ associated with their coordinates $z_U$. 
We can fix the covering so that 
\begin{itemize}
\item $| d{\hat{\delta}} |_{\omega_U} > 1 $ on each chart $U$; 
\item $\omega_U$ are uniformly comparable to $\omega_0$; 
\item a $\mathcal{C}^k$-norm for functions defined on a neighborhood of $\ol{\Omega}$, say $\| \cdot \|_{\mathcal{C}^k(\ol{\Omega})}$,
bounds the $\mathcal{C}^k$-norm associated with the coordinate $z_U$ from above for functions 
compactly supported in $U$. 
\end{itemize}

Let $p\in\pa\Omega$ and take one of the holomorphic charts that contains $p$, say $(U; z_U=(z_1, z_2, \cdots, z_n))$. 
For small $\varepsilon > 0$, consider a non-tangential cone 
$\Gamma_{p,\varepsilon} := \{ z \in U \cap \Omega \mid |z - p| < 2 {\hat{\delta}}(z), |z - p| < \varepsilon \}$ 
with vertex at $p$. Note that $\Gamma_{p,\varepsilon}$ is non-empty 
as $\ol{\Gamma}_{p,\varepsilon}$ contains a segment starting from $p$ normal to $\ker d{\hat{\delta}}_p$.
It suffices to find a positive constant $C$ independent of the choice of $p$ so that 
\[
D_U := \frac{dV_{\omega}}{dV_{\omega_U}} \leq C {\hat{\delta}}^{n\eta - 2 - (n - \ell - 1)}
\]
holds on $\Gamma_{p,\varepsilon}$ for some $\varepsilon = \varepsilon(p) > 0$.
That is because $\bigcup_{p \in \pa\Omega} \Gamma_{p,\varepsilon(p)} = W \cap \Omega$ for some neighborhood $W$ of $\pa\Omega$
and $\omega_0$ is comparable to every $\omega_U$ with a uniform constant; 
we can prove the desired inequality on $W \cap \Omega$. 
\\

To compute $dV_{\omega}/dV_{\omega_U}$, we will select an orthonormal frame of $T^{1,0}U$. 
By a unitary transformation, we can suppose $\ker d{\hat{\delta}}_p = \C^{n-1} \times \R$ and $\C^{\ell} \times \{0'\}$ 
is contained in the kernel of the Levi form of $\pa\Omega$ at $p$.
Define a $\mathcal{C}^2$-smooth frame $\mathcal{Y} = (Y_1, Y_2, \cdots, Y_n)$ of $T^{1,0}U$ by  
\[
Y_j := \frac{\pa}{\pa z_j} -  \frac{\frac{\pa{\hat{\delta}}}{\pa z_j}}{\frac{\pa{\hat{\delta}}}{\pa z_n}}\frac{\pa}{\pa z_n}\, (j = 1, 2, \cdots, n-1),
\quad Y_n := \frac{\pa}{\pa z_n}.
\]
Note that $\{Y_1, Y_2, \cdots, Y_{n-1} \}$ spans $\ker \pa{\hat{\delta}}$ on $U$. 
We apply the Gram--Schmidt procedure to $\mathcal{Y}$ and obtain an orthonormal frame $\mathcal{X} = ( X_1, X_2, \cdots, X_{n} )$ 
with respect to $\omega_U$. 
Denote by $A(z) = (a_{jk}(z))$ the change-of-base matrices at each point: $X_k = \sum_{j=1}^n Y_j a_{jk}$ on $U$.

We would like to estimate each $\lambda_{j\ol{k}} := \omega(X_j, \ol{X_k})$ on $\Gamma_{p,\varepsilon}$.
To achieve it, we combine two estimates: one is about $\mu_{j\ol{k}} := \omega(Y_j, \ol{Y_k})$ and 
the other is about the change-of-base matrices $A(z)$. 

First consider the behavior of $\mu_{j\ol{k}}$ on $\Gamma_{p,\varepsilon}$.
The equality
\begin{equation}  \label{metriccomputation}
\omega = i\eta{\hat{\delta}}^{\eta} \left\lbrace \frac{\pa\opa(-{\hat{\delta}})}{{\hat{\delta}}} + (1-\eta) \frac{\pa{\hat{\delta}}\wedge\opa{\hat{\delta}}}{{\hat{\delta}}^2} \right\rbrace
\end{equation}
yields that if $j = k = n$, 
\[
\lim_{z \to p, z \in U\cap\Omega} \frac{\mu_{n\ol{n}}(z)}{{\hat{\delta}}(z)^{\eta-2}} = \eta(1-\eta)|\pa{\hat{\delta}}(Y_n(p))|^2 \leq  \| {\hat{\delta}} \|_{\mathcal{C}^1(\ol{\Omega})}^2;
\]
otherwise,
\[
\lim_{z \to p, z \in U\cap\Omega} \frac{|\mu_{j\ol{k}}(z)|}{{\hat{\delta}}(z)^{\eta-1}} = \eta|\pa\opa(-{\hat{\delta}})(Y_j(p), \ol{Y_k(p)})| \leq  \| {\hat{\delta}} \|_{\mathcal{C}^2(\ol{\Omega})}.
\]
We can say more for directions in which the Levi-form vanishes. 
If $1 \leq j \leq \ell$, $1 \leq k \leq n-1$ or $1 \leq j \leq n-1$, $1 \leq k \leq \ell$, 
\begin{align*}
\limsup_{z \to p, z \in \Gamma_{p,\varepsilon}} \frac{|\mu_{j\ol{k}}(z)|}{{\hat{\delta}}(z)^{\eta} } 
& = \limsup_{z \to p, z \in \Gamma_{p,\varepsilon}} \eta\left|\frac{\pa\opa(-{\hat{\delta}})(Y_j(z), \ol{Y_k(z)})}{{\hat{\delta}}(z)}\right| \\
& = \limsup_{z \to p, z \in \Gamma_{p,\varepsilon}} \eta \frac{|z-p|} {{\hat{\delta}}(z)} \left|\frac{\pa\opa(-{\hat{\delta}})(Y_j(z), \ol{Y_k(z)}) - 0}{|z-p|} \right| \\
& \leq 2 | d \left(\pa\opa(-{\hat{\delta}})(Y_j, \ol{Y_k})\right)(p) |_{\omega_U} \\
& \leq 2 (\| {\hat{\delta}} \|_{\mathcal{C}^3(\ol{\Omega})} + 2\| {\hat{\delta}} \|^2_{\mathcal{C}^2(\ol{\Omega})}). 
\end{align*}

Next we proceed to estimate the change-of-base matrices $A(z)$. 
We identify an $n$-tuple of $(1,0)$-vectors with an $n \times n$ matrix by using our coordinate $z_U$. 
Then, we have $\mathcal{X}(p) = \mathcal{Y}(p) = I_n$ and $A(z) =\mathcal{Y}^{-1}(z) \cdot \mathcal{X}(z)$ 
where $I_n$ denotes the identity matrix. 
As a matrix-valued 1-form, we have 
\[
dA(p) = \mathcal{Y}^{-1}(p) \cdot d\mathcal{X}(p) + d\mathcal{Y}^{-1}(p) \cdot X(p) =  d\mathcal{X}(p) + d\mathcal{Y}^{-1}(p).
\]
Since $ I_n = \mathcal{Y}^{-1}(z) \cdot \mathcal{Y}(z) $, we also have
\[
0 = d(\mathcal{Y}^{-1} \cdot \mathcal{Y})(p) = d\mathcal{Y}^{-1}(p) + d\mathcal{Y}(p).
\]
Now let $\mathrm{GS} : GL(n, \C) \to U(n)$ be the map determined by the Gram--Schmidt procedure. 
Its differential at $I_n$ defines $d\mathrm{GS}_{I_n}: \mathfrak{gl}(n, \C) \to \mathfrak{u}(n)$. 
We linearly extend this map on matrix-valued, i.e., $\mathfrak{gl}(n, \C)$-valued 1-forms and 
also write $d\mathrm{GS}_{I_n}$ for the extended linear map by abuse of notation.
Then, $d \mathrm{GS}_{I_n} \left(d\mathcal{Y}(p) \right) = d\mathcal{X}(p)$ follows from $GS(\mathcal{Y}(z)) = \mathcal{X}(z)$.
Combining these equalities, we therefore have
\[
dA(p) = d \mathrm{GS}_{I_n} \left(d\mathcal{Y}(p) \right) - d \mathcal{Y}(p).
\]

We use the norm $| A | = \max_{j,k} |a_{jk}|$ for matrices and consider the induced norm for linear maps between spaces of matrices. 
Since a straightforward computation yields $|d \mathcal{Y}(p) |_{\omega_U} \leq \|{\hat{\delta}}\|_{\mathcal{C}^2(\ol{\Omega})}$, we have 
\begin{align*}
\limsup_{z \to p, z \in \Gamma_{p,\varepsilon}} \frac{ |A(z) - I_n|}{{\hat{\delta}}(z)} 
& = \limsup_{z \to p,  z \in \Gamma_{p,\varepsilon}} \frac{|z-p|}{{\hat{\delta}}(z)} \frac{ |A(z) - I_n|}{|z - p|}  \\
& \leq   2 | dA(p) |_{\omega_U} \\
& \leq 2 (| d \mathrm{GS}_{I_n} | + 1 ) | d\mathcal{Y}(p) |_{\omega_U} \\
& \leq 2 (| d \mathrm{GS}_{I_n} | + 1 ) \| {\hat{\delta}} \|_{\mathcal{C}^2(\ol{\Omega})}.
\end{align*}
Note that $| d \mathrm{GS}_{I_n} |$ is independent of $p$ and depends only on $n$.

By combining the estimates on  $\mu_{j\ol{k}}$ and $A(z)$ above, we can find a positive constant $C$ depending only on 
$n = \dim X$ and $\| {\hat{\delta}} \|_{\mathcal{C}^3(\ol{\Omega})}$
so that 
\begin{align}
\label{metricdev}
|\lambda_{j\ol{k}}(z)| &= \left| \sum_{l,m} \mu_{l\ol{m}}(z) a_{jl}(z) \ol{a_{km}(z)} \right| \\
&\leq
\begin{cases}
  C {\hat{\delta}}^{\eta-2} & (\text{for $j = k = n$}) \\
  C {\hat{\delta}}^{\eta}   & (\text{for $1 \leq j \leq \ell,\, 1\leq k \leq n-1$}) \\
  C {\hat{\delta}}^{\eta}   & (\text{for $1 \leq j \leq n-1, 1 \leq k \leq \ell$}) \\
  C {\hat{\delta}}^{\eta-1} & (\text{otherwise}) \\
\end{cases} \nonumber
\end{align}
holds on $\Gamma_{p,\varepsilon}$ for $0 < \varepsilon \ll 1$. 
It follows that
\begin{align*}
D_U
&= \det \left(\lambda_{j\ol{k}}\right)_{j,k=1}^n \\
&\leq n! C^n {\hat{\delta}}^{\ell \eta + (n-\ell-1)(\eta - 1) + (\eta - 2)}\\
&= n! C^n {\hat{\delta}}^{n\eta - 2 - (n - \ell - 1)} 
\end{align*}
on $\Gamma_{p,\varepsilon}$, which completes the proof.  
\end{proof}

%%%%%
\begin{Lemma}
\label{formdev}
Suppose that $\pa \Omega$ is $\mathcal{C}^3$-smooth and the Levi-form of $\pa\Omega$ has at least $\ell$ zero eigenvalues everywhere. 
Then, for any $(n,n-1)$-form $u$ on $\Omega$, 
\[
|u|^2_{\omega_0} dV_{\omega_0} \lesssim |u|^2_\omega {\hat{\delta}}^{(n-1)\eta-2-(n- \ell -1)} dV_\omega 
\]
near $\pa \Omega$ with positive constant independent of $u$.
\end{Lemma}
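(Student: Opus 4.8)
The plan is to estimate the pointwise norm of an $(n,n-1)$-form with respect to $\omega$ in terms of its norm with respect to $\omega_0$ (equivalently $\omega_U$, which is uniformly comparable to $\omega_0$), working on the non-tangential cones $\Gamma_{p,\varepsilon}$ and using the orthonormal frame $\mathcal{X} = (X_1, \dots, X_n)$ for $\omega_U$ constructed in the previous lemma. The point is that an $(n,n-1)$-form is, up to conjugation and the fixed top holomorphic piece, a $(0,1)$-form, so it decomposes along the coframe dual to $\mathcal{X}$; the component in the direction dual to a ``bad'' vector $X_k$ gets weighted by $\lambda_{k\ol k}^{-1}$ when we pass from $|\cdot|_{\omega_0}$ to $|\cdot|_{\omega}$. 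The largest such weight is $\lambda_{n\ol n}^{-1} \approx {\hat{\delta}}^{2-\eta}$ coming from the estimate $\lambda_{n\ol n} \lesssim {\hat{\delta}}^{\eta-2}$ in \eqref{metricdev}, whereas the remaining directions contribute only the smaller factor ${\hat{\delta}}^{1-\eta}$ or ${\hat{\delta}}^{-\eta}$. So I expect $|u|^2_{\omega_0} \lesssim {\hat{\delta}}^{2-\eta} |u|^2_\omega$ pointwise.

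First I would fix $p \in \pa\Omega$, a chart $(U;z_U)$ containing it, and work on $\Gamma_{p,\varepsilon}$ with $\varepsilon$ small. Writing $(\xi_1, \dots, \xi_n)$ for the $\omega_U$-orthonormal coframe of $(1,0)$-forms dual to $\mathcal{X}$, every $(n,n-1)$-form $u$ can be written $u = \sum_{k=1}^n u_k \, \xi_1 \wedge \dots \wedge \xi_n \wedge \ol{\xi_1} \wedge \dots \wedge \widehat{\ol{\xi_k}} \wedge \dots \wedge \ol{\xi_n}$, and $|u|^2_{\omega_U} = \sum_k |u_k|^2$. To compute $|u|^2_\omega$, note that the coframe dual to $\mathcal{X}$ is \emph{not} $\omega$-orthonormal, but since $\omega$ is diagonalized by \emph{some} $\omega_U$-orthonormal frame with eigenvalues bounded as in \eqref{metricdev}, a routine linear-algebra argument (or: work in the $\omega$-eigenframe and expand) gives $|u|^2_\omega \gtrsim \bigl(\prod_{j} \lambda_{j\ol j}^{-1}\bigr) \sum_k \lambda_{k\ol k} |u_k|^2$, because an $(n,n-1)$-form's norm picks up $\prod_j \lambda_{j\ol j}^{-1}$ from the top $(n,0)$-part and the $\ol{\xi_k}$-deletion and then the factor $\lambda_{k\ol k}$ back in the deleted slot. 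Since $dV_\omega = \bigl(\prod_j \lambda_{j\ol j}\bigr) dV_{\omega_U}$ up to constants, this yields $|u|^2_\omega \, {\hat{\delta}}^{(n-1)\eta - 2 - (n-\ell-1)} dV_\omega \gtrsim {\hat{\delta}}^{(n-1)\eta - 2 - (n-\ell-1)} \bigl(\sum_k \lambda_{k\ol k}|u_k|^2\bigr) dV_{\omega_U}$, and it remains to check that ${\hat{\delta}}^{(n-1)\eta - 2 - (n-\ell-1)} \lambda_{k\ol k} \gtrsim 1$ for every $k$, using the lower bounds on $\lambda_{k\ol k}$ that mirror \eqref{metricdev}.

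The main obstacle I anticipate is that \eqref{metricdev} only supplies \emph{upper} bounds on $|\lambda_{j\ol k}|$, and here I genuinely need matching \emph{lower} bounds on the diagonal entries $\lambda_{k\ol k}$ along $\Gamma_{p,\varepsilon}$ — more precisely, $\lambda_{n\ol n} \gtrsim {\hat{\delta}}^{\eta-2}$ and $\lambda_{k\ol k} \gtrsim {\hat{\delta}}^{\eta-1}$ for $k < n$ that fail to lie in the Levi-null directions, while the ``bad'' directions $1 \le k \le \ell$ only need $\lambda_{k\ol k} \gtrsim {\hat{\delta}}^{\eta}$ but the exponent arithmetic $(n-1)\eta - 2 - (n-\ell-1) + \eta = n\eta - 2 - (n-\ell-1)$ shows this still suffices since $dV_\omega$ carries exactly one extra factor beyond $dV_\omega$ of $u$ itself. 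Establishing these lower bounds requires revisiting \eqref{metriccomputation}: the dominant term in $\mu_{n\ol n}$ is $\eta(1-\eta){\hat{\delta}}^{\eta-2}|\pa{\hat{\delta}}(Y_n)|^2$ with $|\pa{\hat{\delta}}(Y_n)| = |\pa{\hat{\delta}}/\pa z_n|$ bounded below near $p$ (by our choice $|d{\hat{\delta}}|_{\omega_U} > 1$ and the normalization $\ker d{\hat{\delta}}_p = \C^{n-1}\times\R$), and for the intermediate directions one uses that the Levi form of $\pa\Omega$ is \emph{positive semidefinite} with the prescribed null space, so its nonzero eigenvalues are bounded below on the compact set $\pa\Omega$; strict pseudoconvexity transverse to the null directions gives $\lambda_{k\ol k} \gtrsim {\hat{\delta}}^{\eta-1}$. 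Once these lower bounds are in hand, combining with the change-of-base estimate $|A(z)-I_n| \lesssim {\hat{\delta}}$ (so $a_{kk} \to 1$) makes the passage from $\mu$ to $\lambda$ harmless, and the claimed inequality follows by the same ``cover $\pa\Omega$ by finitely many cones'' argument as in the previous lemma.
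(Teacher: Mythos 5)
There is a genuine gap in your argument, and it sits exactly where you anticipated trouble: the lower bounds on the diagonal coefficients $\lambda_{k\ol{k}}$ that your reduction requires are not available under the hypotheses. The hypothesis is that the Levi form has \emph{at least} $\ell$ zero eigenvalues, not exactly $\ell$; nothing forces the remaining $n-\ell-1$ eigenvalues to be uniformly positive on $\pa\Omega$ (take $\ell=0$: your argument would then demand strict pseudoconvexity, which is certainly not assumed). So the claim $\lambda_{k\ol{k}}\gtrsim{\hat{\delta}}^{\eta-1}$ for the intermediate directions has no basis. The situation in the null directions is worse: since $Y_k\in\ker\pa{\hat{\delta}}$ for $k\leq n-1$, formula \eqref{metriccomputation} gives $\mu_{k\ol{k}}=\eta{\hat{\delta}}^{\eta-1}\,\pa\opa(-{\hat{\delta}})(Y_k,\ol{Y_k})$, and for $k\leq\ell$ the Hessian factor vanishes at $p$ and is merely $O(|z-p|)$ on the cone; positivity of $\omega$ makes it positive in $\Omega$, but there is no quantitative lower bound of order ${\hat{\delta}}$, so $\lambda_{k\ol{k}}\gtrsim{\hat{\delta}}^{\eta}$ cannot be established (it could degenerate like ${\hat{\delta}}^{2}$, say). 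There is also a factor-counting slip that created this spurious need: in the diagonal model the correct identity is $|u|^2_\omega=\bigl(\prod_j\lambda_{j\ol{j}}^{-1}\bigr)^2\sum_k\lambda_{k\ol{k}}|u_k|^2$, with the product \emph{squared} (one factor from the $(n,0)$-part, one from the $(0,n-1)$-part), not the single factor you wrote. With the correct power, after multiplying by $dV_\omega\approx\bigl(\prod_j\lambda_{j\ol{j}}\bigr)dV_{\omega_U}$ the condition to check becomes $\prod_{j\neq k}\lambda_{j\ol{j}}\lesssim{\hat{\delta}}^{(n-1)\eta-2-(n-\ell-1)}$ for each $k$, which needs only the \emph{upper} bounds \eqref{metricdev} and is exactly the exponent arithmetic of the paper. (Your opening pointwise guess $|u|^2_{\omega_0}\lesssim{\hat{\delta}}^{2-\eta}|u|^2_\omega$ is likewise not what the Lemma asserts and is not true in this generality; the presence of $dV_\omega$ on the right-hand side is essential.)

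The paper's proof avoids eigenvalue lower bounds entirely. It applies the Cauchy--Schwarz inequality $|u|_\omega\geq |u(V_k)|/|V_k|_\omega$ to the polyvectors $V_k=X_1\wedge\cdots\wedge X_n\otimes\ol{X}_1\wedge\cdots\wedge\widehat{\ol{X}}_k\wedge\cdots\wedge\ol{X}_n$ built from the $\omega_U$-orthonormal frame; one has $|V_k|^2_\omega=D_U\,|\ol{X}_1\wedge\cdots\wedge\widehat{\ol{X}}_k\wedge\cdots\wedge\ol{X}_n|^2_\omega$, and the minor is bounded \emph{above} using \eqref{metricdev} by $(n-1)!\,C^{n-1}{\hat{\delta}}^{(n-1)\eta-2-(n-\ell-1)}$, while the determinant factor $D_U$ cancels against $dV_\omega=D_U\,dV_{\omega_U}$ at the last step. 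Since $\max_k|u(V_k)|^2\gtrsim|u|^2_{\omega_U}$ by orthonormality of $\mathcal{X}$, the Lemma follows from upper bounds alone. If you want to salvage your componentwise route, you must either restrict to the case of exactly $\ell$ zero eigenvalues with uniformly positive transverse Levi form (a strictly stronger hypothesis than the paper's) or reorganize the estimate so that, as above, only products of upper bounds over the complementary indices appear; as written, the proposal does not prove the Lemma.
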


\begin{proof}
It suffices to prove the inequality on $\Gamma_{p,\varepsilon}$ with $\omega_U$ instead of $\omega_0$ 
where we work in the same local situation as in the proof of Lemma \ref{volumedev}. 
Consider the induced frame of $\wedge^n T^{1,0}U \otimes \wedge^{n-1} T^{0,1}U$ from $\{X_1, X_2, \cdots, X_n\}$ over $U$.
It follows from (\ref{metricdev}) that 
\begin{align*}
&|X_1 \wedge X_2\wedge \cdots\wedge X_n \otimes \ol{X}_1\wedge \ol{X}_2\wedge \cdots\wedge \widehat{\ol{X}}_{k}\wedge \cdots\wedge \ol{X}_n|^2_\omega \\
&= D_U |\ol{X}_1\wedge \ol{X}_2\wedge \cdots\wedge \widehat{\ol{X}}_{k}\wedge \cdots\wedge \ol{X}_n|^2_\omega \\
&\leq  D_U (n-1)! C^{n-1}  \begin{cases}
{\hat{\delta}}^{(\ell-1)\eta + (n-\ell-1)(\eta - 1) + (\eta - 2)} &\text{(for $1 \leq k \leq \ell$)}\\
{\hat{\delta}}^{\ell\eta + (n-\ell-2)(\eta - 1) + (\eta -2)} &\text{(for $\ell + 1 \leq k \leq n-1$)}\\
{\hat{\delta}}^{\ell\eta + (n-\ell-1)(\eta - 1)} &\text{(for $k = n$)}\\
\end{cases}\\
&\leq   D_U (n-1)! C^{n-1}{\hat{\delta}}^{(n-1)\eta - 2 - (n-\ell-1)}.
\end{align*}
Hence, we can estimate $|u|^2_{\omega}$ as 
\begin{align*}
|u|^2_{\omega} 
&\geq \max_{1 \leq k \leq n} \frac
{|u(X_1, X_2, \cdots, X_n, \ol{X}_1, \ol{X}_2, \cdots, \widehat{\ol{X}}_{k}, \cdots, \ol{X}_n)|^2}
{|X_1 \wedge X_2\wedge \cdots\wedge X_n \otimes \ol{X}_1\wedge \ol{X}_2\wedge \cdots\wedge \widehat{\ol{X}}_{k}\wedge \cdots\wedge \ol{X}_n|^2_\omega}\\
&\geq\frac
{\max_{1 \leq k \leq n} |u(X_1, X_2, \cdots, X_n, \ol{X}_1, \ol{X}_2, \cdots, \widehat{\ol{X}}_{k}, \cdots, \ol{X}_n)|^2}
{(n-1)! C^{n-1} D_U {\hat{\delta}}^{(n-1)\eta - 2 - (n-\ell-1)}} \\
&\geq C' \frac
{|u|^2_{\omega_U}}
{D_U {\hat{\delta}}^{(n-1)\eta - 2 - (n-\ell-1)}}
\end{align*}
with constant $C'>0$ independent of $u$. We therefore have the desired inequality 
\begin{align*}
|u|^2_\omega dV_\omega 
&\geq C' \frac{1}{D_U}|u|^2_{\omega_U} {\hat{\delta}}^{-(n-1)\eta + 2 + (n-\ell-1)} D_U dV_{\omega_U}\\
&= C' |u|^2_{\omega_U} {\hat{\delta}}^{-(n-1)\eta + 2 + (n-\ell-1)} dV_{\omega_U}.
\end{align*}
\end{proof}

%%%%%%%%%%%%%%%%%%%%%%%%%%%%%%%%%%%%%%%%%%%%%%%%%%%%%%%%%%%%%%%%%%%%%%%%%%%%%%%%%%%
\section{The $\opa$ equation in top degree}

In this section, we will study a version of an $L^2$ $\opa$-Cauchy problem in top degree 
on a smoothly bounded domain with weakly pseudoconvex boundary, which, by duality, implies 
a restriction on the rank of the Levi-form of $\pa \Omega$.

\begin{Theorem}   \label{vanishing}
Let $X$ be a complex manifold of dimension $n \geq 2$ and $\Omega \Subset X$ a relatively compact domain with $\mathcal{C}^3$-smooth boundary. 
Suppose that the Levi-form of $\pa \Omega$ has at least $\ell$ zero eigenvalues everywhere on $\pa\Omega$ for some $0\leq \ell\leq n-1$.
If $\eta(\Omega) > \frac{n-\ell}{n}$, 
then for any $f\in L^2_{n,n}(X)$  which is compactly supported in $\Omega$, 
there exists a current $T\in\mathcal{D}^\prime_{0,1}(X)$ with supported in $\ol{\Omega}$ such that $\opa T = f$ in the distribution sense on $X$.
\end{Theorem}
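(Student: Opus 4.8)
The plan is to realize $T$ as the extension by zero to $X$ of an $L^2$-solution on $\Omega$ of $\opa u = f$, so the argument has an interior solvability step (with carefully weighted $L^2$-estimates) and a boundary step verifying that the zero-extension picks up no term on $\pa\Omega$. First, since $\eta(\Omega)>\frac{n-\ell}{n}$, fix a defining function $-\hat\delta$ of $\Omega$ with $\eta_{\hat\delta}>\frac{n-\ell}{n}$ and then fix $\eta$ with $\frac{n-\ell}{n}<\eta<\eta_{\hat\delta}$. Then $-\hat\delta^\eta$ is a bounded strictly plurisubharmonic exhaustion, by Ohsawa--Sibony $i\pa\opa(-\log\hat\delta)\geq c_0\omega_0$ on $\Omega$, and we equip $\Omega$ with the K\"ahler metric $\omega:=i\pa\opa(-\hat\delta^\eta)$ of the previous section, so that (\ref{metriccomputation}), the volume estimate (\ref{volumedev}) and Lemma \ref{formdev} are available. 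Note that $\eta>\frac{n-\ell}{n}$ is precisely $n\eta-2-(n-\ell-1)>-1$, so by (\ref{volumedev}) the manifold $(\Omega,\omega)$ has finite volume and every smooth form with compact support in $\Omega$ has finite norm in all the spaces $L^2_{p,q}(\Omega,\hat\delta^s,\omega)$.

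For the solvability: $f$ is $\opa$-closed for degree reasons, and $\Omega$ carries the plurisubharmonic exhaustion $-\log\hat\delta$, so the $L^2$-$\opa$-theory applies to $(\Omega,\omega)$ even though $X$ need not be Stein (a standard point, handled by exhausting $\Omega$ by sublevel sets of $-\log\hat\delta$). Applying the Bochner--Kodaira--Nakano inequality (\ref{BKN})--(\ref{negativecurvature}) in top degree with the strictly plurisubharmonic weight $-\eta\log\hat\delta$ (whose complex Hessian dominates a positive multiple of $\omega$, by (\ref{metriccomputation})) first yields a solution of $\opa v=f$ in $L^2_{n,n-1}(\Omega,\hat\delta^\eta,\omega)$. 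I would then trade the weight $\hat\delta^\eta$ for $\hat\delta^{-\eta}$ by a Donnelly--Fefferman/Berndtsson--Charpentier type estimate, using that $-\hat\delta^\eta$ is a \emph{bounded} plurisubharmonic function and carrying out the estimate with respect to $\omega$ itself rather than $\omega_0$; this produces a (generally different) solution $u$ of $\opa u=f$ lying in the smaller space $L^2_{n,n-1}(\Omega,\hat\delta^{-\eta},\omega)$, with $\Vert u\Vert_{\hat\delta^{-\eta},\omega}<\infty$ because $f$ is compactly supported in $\Omega$. It is this interchange of $\hat\delta^\eta$ for $\hat\delta^{-\eta}$, performed in the anisotropic metric $\omega$, that encodes the degeneracy hypothesis on the Levi form.

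It remains to pass from $u$ on $\Omega$ to $T=\tilde u$ on $X$. By Lemma \ref{formdev}, $\Vert u\Vert_{\hat\delta^{-\eta},\omega}<\infty$ forces $\int_\Omega|u|^2_{\omega_0}\hat\delta^{(n-\ell+1)-n\eta}\,dV_{\omega_0}<\infty$; since $\eta>\frac{n-\ell}{n}$ the exponent is $<1$, hence $\hat\delta^{-(n-\ell+1)+n\eta}$ is integrable and, by Cauchy--Schwarz, $u\in L^1_{n,n-1}(\Omega,\omega_0)$, so $\tilde u$ is a well-defined current on $X$ with support in $\ol\Omega$. To see $\opa\tilde u=f$ on $X$, note that $f$ is supported away from $\pa\Omega$, so $u$ is $\opa$-closed near $\pa\Omega$; taking cutoffs $\chi_\varepsilon=\chi(\hat\delta/\varepsilon)$, one has $\opa(\chi_\varepsilon u)=\chi_\varepsilon f+\opa\chi_\varepsilon\wedge u$, and testing against $\varphi\in\mathcal D^{0,0}(X)$ reduces everything to showing the error $\opa\chi_\varepsilon\wedge u$ goes to $0$. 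The key is to estimate it in the metric $\omega$: from (\ref{metriccomputation}) one gets $|\opa\hat\delta|^2_\omega\lesssim\hat\delta^{2-\eta}$, hence $|\opa\chi_\varepsilon|_\omega\lesssim\varepsilon^{-1}|\opa\hat\delta|_\omega\lesssim\varepsilon^{-\eta/2}$ on the shell $\{\hat\delta\sim\varepsilon\}$ — far smaller than the $\varepsilon^{-1}$ one would get in $\omega_0$. Combining this with $\Vert u\Vert_{\hat\delta^{-\eta},\omega}<\infty$, Cauchy--Schwarz and (\ref{volumedev}), the error is bounded by $\varepsilon^{(n\eta-(n-\ell))/2}$ times the $L^2(\hat\delta^{-\eta},\omega)$-mass of $u$ over $\{\hat\delta<\varepsilon\}$, which tends to $0$ exactly because $n\eta-(n-\ell)>0$. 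Therefore $\opa\tilde u=f$ in $\mathcal D'(X)$ and $T:=\tilde u$ is the required current.

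The main obstacle is the coupling of the middle step with the end of the third: producing a solution in $L^2_{n,n-1}(\Omega,\hat\delta^{-\eta},\omega)$ — i.e.\ combining the Donnelly--Fefferman gain in the weight with the switch to the special metric $\omega$ — and then matching the powers of $\hat\delta$ so that the zero-extension has no boundary term. Everything is tuned so that the ``constant'' $(n,n-1)$-forms, which are the ones with nonzero boundary trace, fall out of $L^2_{n,n-1}(\Omega,\hat\delta^{-\eta},\omega)$ exactly at the threshold $\eta=\frac{n-\ell}{n}$; confirming this and keeping track of the bookkeeping between $\omega$, $\omega_0$ and $\hat\delta$ via (\ref{volumedev}) and Lemma \ref{formdev} is where the real work lies, while the solvability on the possibly non-Stein $\Omega$ is a routine point given the exhaustion $-\log\hat\delta$.
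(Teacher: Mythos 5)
Your proposal is correct and follows essentially the same route as the paper: fix $\eta\in(\frac{n-\ell}{n},\eta_{\hat\delta})$, use a Donnelly--Fefferman type estimate in the special metric $\omega=i\pa\opa(-\hat\delta^\eta)$ to solve $\opa u=f$ with $u\in L^2_{n,n-1}(\Omega,\hat\delta^{-\eta},\omega)$, convert this via Lemma \ref{formdev} into the $\omega_0$-weighted integrability $\int_\Omega|u|^2_{\omega_0}\hat\delta^{s}\,dV_{\omega_0}<\infty$ with $s=(n-\ell+1)-n\eta<1$ so that the zero-extension is a current, and then verify $\opa T=f$ with the cutoffs $\chi(\hat\delta/\varepsilon)$ by estimating the boundary term using $|\opa\hat\delta|^2_\omega\lesssim\hat\delta^{2-\eta}$, the volume bound \eqref{volumedev}, and $n\eta-(n-\ell)>0$. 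The only cosmetic divergence is that you describe the interior solvability as a two-stage process (first solve in $\hat\delta^{\eta}$, then ``trade'' the weight), whereas the paper's Theorem \ref{donnelly-fefferman} proves the a priori estimate $\Vert v\Vert_{\hat\delta^{-\eta},\omega}\leq C\Vert\opa^*v\Vert_{\hat\delta^{-\eta},\omega}$ directly (via its dual form and the Berndtsson--Charpentier substitution $u=v\hat\delta^{\eta+\varepsilon/2}$), but the mechanism invoked is the same.
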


%%%%%
Theorem \ref{vanishing} is based on the following estimate of Donnelly--Fefferman type. 

\begin{Theorem}  \label{donnelly-fefferman} 
Let $X$ be a complex manifold of dimension $n \geq 2$ and $\Omega \Subset X$ a relatively compact domain with $\mathcal{C}^2$-smooth boundary. 
Let $-{\hat{\delta}}$ be a defining function of $\Omega$ with Diederich--Fornaess exponent $\eta_{\hat{\delta}} > 0$. 
For an arbitrary but fixed $\eta \in (0, \eta_{\hat{\delta}})$ we define
 $\omega := i\pa\opa(-{\hat{\delta}}^\eta)$.
Then, for any $f\in L^2_{n,n}(\Omega,{\hat{\delta}}^{-\eta},\omega)$, 
there exists $u\in  L^2_{n,n-1}(\Omega,{\hat{\delta}}^{-\eta},\omega)$ satisfying $\opa u = f$ in the distribution sense in $\Omega$.
\end{Theorem}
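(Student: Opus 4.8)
The plan is to deduce the statement from the Donnelly--Fefferman $L^2$-estimate, applied on $(\Omega,\omega)$ with a judicious choice of weights. First some normalizations. Since $0<\eta<\eta_{\hat{\delta}}$, the function $\Psi:=-{\hat{\delta}}^{\eta}$ is a bounded, strictly plurisubharmonic exhaustion of $\Omega$; hence $-\log(-\Psi)$ is a strictly plurisubharmonic exhaustion of $\Omega$ tending to $+\infty$, so $\Omega$ is Stein, and $\omega=i\pa\opa\Psi>0$ is a K\"ahler metric on $\Omega$ with the bounded global potential $\Psi$. Rescaling ${\hat{\delta}}$ by a positive constant we may assume $0<{\hat{\delta}}<1$ on $\Omega$.

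The key point is the choice of weights. I would set $\varphi:=\Psi=-{\hat{\delta}}^{\eta}$ and $\psi:=-\log(-\Psi)=-\eta\log{\hat{\delta}}$. Then $\varphi$ is strictly plurisubharmonic, $-e^{-\psi}=\Psi$ is plurisubharmonic, and, because $0<{\hat{\delta}}<1$, both $e^{-\varphi}=e^{{\hat{\delta}}^{\eta}}$ and $e^{\psi-\varphi}=e^{{\hat{\delta}}^{\eta}}{\hat{\delta}}^{-\eta}$ are comparable, with absolute constants, to $1$ and to ${\hat{\delta}}^{-\eta}$ respectively. Moreover $i\pa\opa\varphi=\omega$, which is crucial: for a form $f$ of the top bidegree $(n,n)$ the pointwise curvature norm satisfies $|f|^2_{i\pa\opa\varphi}=|f|^2_{\omega}$, since $|\cdot|_{i\pa\opa\varphi}$ is obtained by diagonalizing $i\pa\opa\varphi$ against a background metric and, taking $\omega$ as background, $i\pa\opa\varphi=\omega$ has all its eigenvalues equal to $1$.

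With these choices, the Donnelly--Fefferman estimate (see \cite{DoFe}; alternatively one reproves it on the Stein K\"ahler manifold $(\Omega,\omega)$ from \eqref{BKN} by Berndtsson's argument, whose mechanism is precisely to promote, using the plurisubharmonicity of $\varphi$ and of $-e^{-\psi}$, a solution in the ``good'' weight $e^{-\varphi-\psi}\asymp{\hat{\delta}}^{\eta}$ to one in the ``bad'' weight $e^{-\varphi+\psi}\asymp{\hat{\delta}}^{-\eta}$ --- the passage announced in the introduction) applies: $f$ is $\opa$-closed automatically, the quantity $\int_\Omega|f|^2_{i\pa\opa\varphi}e^{\psi-\varphi}\,dV_\omega=\int_\Omega|f|^2_\omega\, e^{{\hat{\delta}}^{\eta}}{\hat{\delta}}^{-\eta}\,dV_\omega\le e\,\|f\|^2_{{\hat{\delta}}^{-\eta},\omega}$ is finite, and one obtains $u\in L^2_{n,n-1}(\Omega,\omega)$ with $\opa u=f$ in the distribution sense in $\Omega$ and $\int_\Omega|u|^2_\omega e^{\psi-\varphi}\,dV_\omega\le C_0\int_\Omega|f|^2_\omega e^{\psi-\varphi}\,dV_\omega$ for an absolute constant $C_0$. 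Since ${\hat{\delta}}^{-\eta}\le e^{\psi-\varphi}\le e\,{\hat{\delta}}^{-\eta}$, this gives $u\in L^2_{n,n-1}(\Omega,{\hat{\delta}}^{-\eta},\omega)$ with $\|u\|^2_{{\hat{\delta}}^{-\eta},\omega}\le C_0e\,\|f\|^2_{{\hat{\delta}}^{-\eta},\omega}$, which is the assertion.

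The step I expect to need the most care is making the Donnelly--Fefferman estimate available in this setting: it is usually stated for pseudoconvex domains in $\C^n$ with the Euclidean metric, whereas we work on the complex manifold $\Omega$ with the non-standard, and generally incomplete, K\"ahler metric $\omega$. This is handled in the standard way --- $\Omega$ is Stein, so one carries out the $L^2$-estimates on the relatively compact sublevel sets $\{\Psi<-1/\nu\}$ (or after replacing $\omega$ by $\omega+\varepsilon\omega'$ with $\omega'$ a complete K\"ahler metric and letting $\varepsilon\to0$), with bounds uniform in $\nu$ because the curvature inequalities behind \eqref{BKN} only use that $\varphi$ and $-e^{-\psi}$ are plurisubharmonic, and then passes to a weak limit. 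The other point to watch is the top-degree identity $|f|^2_{i\pa\opa\varphi}=|f|^2_\omega$, which is exactly what makes the choice $\varphi=\Psi$ effective.
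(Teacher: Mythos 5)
Your plan is in essence the same as the paper's: both are the Donnelly--Fefferman/Berndtsson twisting on $(\Omega,\omega)$, and your weight choice $\varphi=-\hat\delta^\eta$, $\psi=-\eta\log\hat\delta$ corresponds (after dualizing $(n,n)$-forms to $(0,0)$-forms via the weighted Hodge star, which is exactly the paper's intermediate step) to the paper's weights. The difference is presentational: you invoke a Donnelly--Fefferman theorem for $(n,q)$-forms as a black box, while the paper proves the needed a priori estimate $\|v\|_{\hat\delta^\eta,\omega}\le C\|\opa v\|_{\hat\delta^\eta,\omega}$ for $v\in\mathcal{D}^{0,0}(\Omega)$ directly from \eqref{BKN}, then dualizes. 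Your observations that $i\pa\opa\varphi=\omega$ trivializes the curvature factor at top degree, and that $e^{\psi-\varphi}\asymp\hat\delta^{-\eta}$, are exactly the right normalizations; and your worry about the non-complete metric is legitimate, though the paper sidesteps it more cheaply by noting that at top degree there is no compatibility condition, so one can go through Hahn--Banach directly from the a priori estimate.

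There is one gap to close. The estimate you invoke is not available under the borderline condition that $-e^{-\psi}$ be merely plurisubharmonic, i.e. $i\pa\psi\wedge\opa\psi\le i\pa\opa\psi$: every version of the Berndtsson twisting needs a strict gap $i\pa\psi\wedge\opa\psi\le\delta\,i\pa\opa\psi$ with $\delta<1$, and the constant degenerates as $\delta\to1$. For your $\psi=-\eta\log\hat\delta$ the strict gap does hold, but only because $\eta<\eta_{\hat\delta}$ is strict: choosing $\varepsilon>0$ with $\eta+\varepsilon<\eta_{\hat\delta}$, plurisubharmonicity of $-\hat\delta^{\eta+\varepsilon}$ gives $i\pa\opa(-\hat\delta)/\hat\delta\ge -(1-\eta-\varepsilon)\,i\pa\hat\delta\wedge\opa\hat\delta/\hat\delta^2$, whence one computes $\delta=\eta/(\eta+\varepsilon)<1$. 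This is precisely the $\varepsilon$-margin the paper builds in explicitly (the weight $\hat\delta^{-(\eta+\varepsilon)}$, the trace inequality \eqref{trace}, and the substitution $u=v\hat\delta^{\eta+\varepsilon/2}$ absorbed with the Cauchy--Schwarz parameter $a$ chosen so that $(1+a)(\eta+\varepsilon/2)^2\le(\eta+\varepsilon)^2$). Your proposal should make this quantitative step explicit rather than resting on ``$-e^{-\psi}$ psh'' alone.
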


\begin{proof}
Let us first see that the conclusion follows in a standard manner from the following a priori estimate:

\begin{Claim}
There exists a constant $C>0$ such that 
\begin{equation}  \label{apriori}
\Vert v\Vert^2_{{\hat{\delta}}^{-\eta},\omega}\leq C \Vert \opa^\ast v\Vert^2_{{\hat{\delta}}^{-\eta},\omega}
\end{equation}
for any $v\in \mathcal{D}^{n,n}(\Omega)$. Here $\opa^\ast = \opa^\ast_{{\hat{\delta}}^{-\eta},\omega}$ is the adjoint of $\opa$ with respect to the scalar product induced by $\Vert\cdot\Vert_{{\hat{\delta}}^{-\eta},\omega}$.
\end{Claim}

Note that in the top degree, we can work with non-complete metrics, since there is no compatibility condition. 
Indeed, let us take $f\in L^2_{n,n}(\Omega,{\hat{\delta}}^{-\eta},\omega)$ and 
define a linear functional $\phi$ on $\opa^*(\mathcal{D}^{n,n}(\Omega)) \subset L^2_{n,n-1}(\Omega, {\hat{\delta}}^{-\eta},\omega)$ 
by $\phi(\opa^*v) = \llangle v, f \rrangle_{{\hat{\delta}}^{-\eta},\omega}$, which is well-defined and bounded from (\ref{apriori}). 
The Hahn--Banach theorem allows us to extend $\phi$ to a bounded linear functional on $L^2_{n,n-1}(\Omega, {\hat{\delta}}^{-\eta}, \omega)$ 
and the Riesz representation theorem yields $u \in L^2_{n,n-1}(\Omega,{\hat{\delta}}^{-\eta},\omega)$ satisfying 
$$\llangle \opa^\ast v, u\rrangle_{{\hat{\delta}}^{-\eta},\omega} = \llangle v, f \rrangle_{{\hat{\delta}}^{-\eta},\omega}$$
for all $v \in \mathcal{D}^{n,n}(\Omega)$, i.e. $\opa u = f$ in the distribution sense in $\Omega$. \\

%%%
Let us proceed to prove (\ref{apriori}).  For a direct proof of it, we would have to work with different adjoint operators. 
Therefore it is somewhat more convenient to actually prove the dual a priori estimate
\begin{equation}  \label{dualapriori}
\Vert v\Vert_{{\hat{\delta}}^{\eta},\omega} \leq C \Vert \opa v\Vert_{{\hat{\delta}}^{\eta},\omega}
\end{equation}
for any $v\in \mathcal{D}^{0,0}(\Omega)$. (\ref{apriori}) then follows from (\ref{dualapriori}) using a weighted Hodge star operator.\\

%%%
So let us proceed to prove (\ref{dualapriori}). 
Since $\eta <\eta_{\hat{\delta}}$, there exists some small $\varepsilon > 0$ such that $\eta + \varepsilon < \eta_{\hat{\delta}}$, which means that
$$i\pa\opa (-{\hat{\delta}}^{\eta +\varepsilon}) \geq 0\quad\mathrm{in}\ \Omega.$$
But then
$$i\pa\opa\log{\hat{\delta}}^{\eta +\varepsilon} = \frac{i\pa\opa{\hat{\delta}}^{\eta +\varepsilon}}{{\hat{\delta}}^{\eta +\varepsilon}} -i\pa\log{\hat{\delta}}^{\eta +\varepsilon}\wedge\opa\log{\hat{\delta}}^{\eta +\varepsilon}\leq -i\pa\log{\hat{\delta}}^{\eta +\varepsilon}\wedge\opa\log{\hat{\delta}}^{\eta +\varepsilon}.$$
Hence we get 
\[
  \mathrm{Trace}_\omega (i\pa\opa \log{\hat{\delta}}^{\eta +\varepsilon}) \leq -\vert\opa\log{\hat{\delta}}^{\eta +\varepsilon}\vert^2_\omega \text{\qquad in\ $\Omega$}
\]
Putting $\psi = {\hat{\delta}}^\eta$, we have $i\pa\opa\psi = -\omega$ by definition of $\omega$, thus $\mathrm{Trace}_\omega(i\pa\opa\psi) = -n$. Hence we get
\begin{equation}  \label{trace}
\mathrm{Trace}_\omega (i\pa\opa\psi + i\pa\opa\log{\hat{\delta}}^{\eta + \varepsilon}) \leq -n-\vert \pa\log{\hat{\delta}}^{\eta + \varepsilon}\vert^2_\omega \qquad\text{\ on\ $\Omega$}.
\end{equation}
 \\

%%%
On $\Omega$, we consider the weight function $e^{-\psi}$. Since $e^{-\psi}$ is bounded from below and from above by positive constants on $\Omega$, 
we can replace the norm $\Vert\cdot\Vert$ by $\Vert\cdot\Vert_{e^{-\psi}}$ for forms on $\Omega$. 

Multiplying the metric of the trivial bundle $\C$  further by ${\hat{\delta}}^{-(\eta +\varepsilon)} = e^{-\log{\hat{\delta}}^{\eta +\varepsilon}}$ on $\Omega$, 
it then follows from (\ref{BKN}) and (\ref{negativecurvature}) that for  $u\in \mathcal{D}^{0,0}(\Omega)$ one has
$$\llangle -\mathrm{Trace}_\omega(i\pa\opa\psi + i\pa\opa\log {\hat{\delta}}^{\eta +\varepsilon})u,u\rrangle_{e^{-\psi}{\hat{\delta}}^{-(\eta+\varepsilon)},\omega} \ \leq \ \Vert \opa u\Vert^2_{e^{-\psi}{\hat{\delta}}^{-(\eta + \varepsilon)},\omega}.$$
Using (\ref{trace}) we obtain 
$$\llangle (n+\vert\opa\log{\hat{\delta}}^{\eta +\varepsilon}\vert^2_\omega)u,u\rrangle_{e^{-\psi}{\hat{\delta}}^{-(\eta + \varepsilon)},\omega}\  \leq 
\ \Vert \opa u\Vert^2_{e^{-\psi}{\hat{\delta}}^{-(\eta +\varepsilon)},\omega}$$
for  $u\in \mathcal{D}^{0,0}(\Omega)$.
Observing that $\pa\log{\hat{\delta}}^{\eta+\varepsilon} = (\eta+\varepsilon)\pa\log{\hat{\delta}}$ and
setting $u= v{\hat{\delta}}^{\eta +\varepsilon/2}$ we obtain
 \begin{eqnarray}
 \llangle (n+ (\eta+\varepsilon)^2 \vert\opa\log{\hat{\delta}}\vert^2_\omega)v,v\rrangle_{e^{-\psi}{\hat{\delta}}^{\eta},\omega}  \leq  \Vert \opa v + (\eta+\frac{\varepsilon}{2})v\opa\log{\hat{\delta}}\Vert^2_{e^{-\psi}{\hat{\delta}}^{\eta},\omega}\nonumber\\
  \leq  (1+\frac{1}{a}) \Vert \opa v\Vert^2_{e^{-\psi}{\hat{\delta}}^{\eta},\omega} + (1+a)(\eta +\frac{\varepsilon}{2})^2 \Vert v\opa\log {\hat{\delta}}\Vert^2_{e^{-\psi}{\hat{\delta}}^{\eta},\omega}.
  \label{estimate}
 \end{eqnarray}
Choosing $a$ so small that $(1+a) (\eta +\frac{\varepsilon}{2})^2 \leq (\eta +\varepsilon)^2$, we can thus absorb the last term in (\ref{estimate}) in the left hand side, which immediately gives the a priori estimate (\ref{dualapriori}).\\
\end{proof}

%%%%%
Now let us give the proof of Theorem \ref{vanishing}.

\begin{proof}[Proof of Theorem \ref{vanishing}] 
By the assumption on $\Omega$, we can find a defining function $-{\hat{\delta}}$ with $\eta_{\hat{\delta}} > \frac{n-\ell}{n}$. We fix some real $\eta$ such that $\frac{n-\ell}{n} < \eta < \eta_{\hat{\delta}}$ and apply Theorem \ref{donnelly-fefferman} with this choice of $\eta$.

Now let $f\in L^2_{n,n}(X)$ be compactly supported in $\Omega$, which implies that $f\in L^2_{n,n}(\Omega,{\hat{\delta}}^{-\eta},\omega)$. Hence it follows from Theorem \ref{donnelly-fefferman} that there exists $u\in L^2_{n,n-1}(\Omega,{\hat{\delta}}^{-\eta},\omega)$ satisfying $\opa u =f$ in $\Omega$. \\

%%%
We first claim that if we extend $u$ by zero outside $\Omega$, then it defines a current $T = T_u \in\mathcal{D}^\prime_{0,1}(X)$. 
Indeed, we see from Lemma \ref{formdev} that 
$$\int_\Omega \vert u\vert^2_{\omega_{0}} {\hat{\delta}}^{1-\nu} dV_{\omega_{0}} 
 \lesssim  \int_\Omega \vert u\vert^2_\omega {\hat{\delta}}^{1-\nu} {\hat{\delta}}^{(n-1)\eta -2 -(n-\ell -1)} dV_\omega .$$
 Now a straightforward computation shows that the last integral can be estimated by $\int_\Omega \vert u\vert^2_\omega{\hat{\delta}}^{-\eta} dV_\omega < +\infty$ if $\nu \leq n\eta -n + \ell$. But by assumption on $\eta$ we have $n\eta - n + \ell > 0$, hence we may deduce that for some small $\nu > 0$ we have 
 $u\in L^2_{n,n-1}(\Omega,{\hat{\delta}}^{1-\nu})$.\\
 
  But then for any $v\in\mathcal{C}^\infty_{0,1}(X)$ we have
\begin{eqnarray}
\vert\int_\Omega u\wedge v\vert^2 & \leq & (\int_\Omega \vert u\vert^2_{\omega_{0}} {\hat{\delta}}^{1-\nu} dV_{\omega_{0}}) \cdot (\int_\Omega \vert v\vert^2_{\omega_{0}} {\hat{\delta}}^{-1+\nu} dV_{\omega_{0}})\nonumber\\
& \leq & \Vert u\Vert^2_{{\hat{\delta}}^{1-\nu}} \cdot (\int_{\Omega} {\hat{\delta}}^{-1+\nu} dV_{\omega_{0}}) \sup_\Omega \vert v\vert^2_{\omega_{0}}.\label{current}
\end{eqnarray}
Since $\nu > 0$, we have $\int_{\Omega} {\hat{\delta}}^{-1+\nu} dV_{\omega_{0}}< + \infty$. Therefore, $u$ defines a current $T\in\mathcal{D}^\prime_{0,1}(X)$.\\

%%%
It remains to see that $T= T_u$ satisfies $\opa T = f$ in the sense of distributions\texttt{} on $X$. 
Let $\alpha\in \mathcal{C}^\infty_{0,0}(X)$. We must show that
\begin{equation}  \label{toshow}
\int_\Omega u\wedge \opa\alpha = \int_\Omega f\wedge\alpha.
\end{equation}

Let $\chi\in\mathcal{C}^\infty(\R,\R)$ be a function such that $\chi(t)=0$ for $t \leq \frac{1}{2}$ and $\chi(t)=1$ for $t \geq 1$. Set $\chi_j = \chi(j{\hat{\delta}})\in\mathcal{D}^{0,0}(\Omega)$. 
Then $\chi_j\alpha\in\mathcal{D}^{0,0}(\Omega)$, and since $\opa u = f$ in $\Omega$, we therefore have
$$\int_\Omega f\wedge \chi_j\alpha = \int_\Omega u\wedge\opa (\chi_j\alpha) = \int_\Omega u\wedge(\alpha\opa \chi_j + \chi_j\wedge\opa\alpha)$$
As $f$ has $L^2$ coefficients on $\Omega$, the integral of $f\wedge\chi_j\alpha$   converges  to the integral of $f\wedge\alpha$  as $j$ tends to infinity.  The convergence of  the integral of $u\wedge\chi_j\opa\alpha$ to the integral of $u\wedge\opa\alpha$ follows from $u\in L^2_{n,n-1}(\Omega,{\hat{\delta}}^{1-\nu})$ (use the estimate (\ref{current})). 

The remaining term can be estimated as follows: Using the Cauchy-Schwarz inequality we have
\begin{align*}
\vert\int_\Omega u\wedge\alpha\opa\chi_j\vert^2 
%&= \left\vert\int_{\lbrace \frac{1}{2j} \leq {\hat{\delta}} \leq \frac{1}{j}\rbrace} u {\hat{\delta}}^{-\eta/2} \wedge\alpha\opa\chi_j {\hat{\delta}}^{\eta/2} \right\vert^2 \\
%&= \left\vert \int_{\lbrace \frac{1}{2j} \leq {\hat{\delta}} \leq \frac{1}{j}\rbrace} u {\hat{\delta}}^{-\eta/2} \wedge \star_\omega \star_\omega \alpha \opa\chi_j {\hat{\delta}}^{\eta/2} \right\vert^2 \\
&= \left\vert \int_{\lbrace \frac{1}{2j} \leq  {\hat{\delta}} \leq \frac{1}{j}\rbrace} \langle u {\hat{\delta}}^{-\eta/2}, \overline{\star_\omega \alpha \opa\chi_j {\hat{\delta}}^{\eta/2}} \rangle_\omega dV_\omega \right\vert^2 \\
&\leq \int_{\lbrace \frac{1}{2j} \leq {\hat{\delta}} \leq \frac{1}{j}\rbrace} |u {\hat{\delta}}^{-\eta/2}|_\omega^2 dV_\omega 
      \cdot \int_{\lbrace \frac{1}{2j} \leq {\hat{\delta}} \leq \frac{1}{j}\rbrace} |\star_\omega \alpha \opa\chi_j {\hat{\delta}}^{\eta/2}|_\omega^2 dV_\omega  \\
%&= \int_{\lbrace \frac{1}{2j} \leq {\hat{\delta}} \leq \frac{1}{j}\rbrace} |u|_\omega^2 {\hat{\delta}}^{-\eta} dV_\omega 
%      \cdot \int_{\lbrace \frac{1}{2j} \leq {\hat{\delta}} \leq \frac{1}{j}\rbrace} |\alpha \opa\chi_j|_\omega^2 {\hat{\delta}}^{\eta} dV_\omega  \\
&\leq \sup_\Omega\vert\alpha\vert^2 \int_{\lbrace {\hat{\delta}} \leq \frac{1}{j}\rbrace} \vert u\vert^2_{\omega}{\hat{\delta}}^{-\eta}dV_{\omega} \cdot 
      \int_\Omega\vert\opa\chi_j\vert^2_{\omega}{\hat{\delta}}^{\eta} dV_{\omega}
\end{align*}
%$$\vert\int_\Omega u\wedge\alpha\opa\chi_j\vert^2 
%%\leq \sup_\Omega\vert\alpha\vert^2 \int_{\lbrace{\hat{\delta}} \leq \frac{1}{j}\rbrace} \vert u\vert^2_{\omega}{\hat{\delta}}^{-\eta}dV_{\omega} \cdot 
%%     \int_\Omega\vert\opa\chi_j\vert^2_{\omega}{\hat{\delta}}^{\eta} dV_{\omega}.$$
where $\star_\omega$ denotes the Hodge star operator with respect to $\omega$ in $\Omega$. 
Since $u\in L^2_{n,n-1}(\Omega,{\hat{\delta}}^{-\eta},\omega)$, 
the integral $\int_{\lbrace{\hat{\delta}} \leq \frac{1}{j}\rbrace} \vert u\vert^2_{\omega}{\hat{\delta}}^{-\eta}dV_{\omega}$ converges to $0$ when $j$ tends to infinity. 

To estimate the second integral, we look at the behavior of its integrand $|\opa \chi_j|^2_\omega$ near $\pa\Omega$. 
From $\opa\chi_j = j\chi^\prime\opa{\hat{\delta}}$, 
\begin{align*}
{\hat{\delta}}(z)^{\eta-2}|\opa\chi_j|^2_\omega(z)
& \leq j^2 \| \chi' \|^2_{\mathcal{C}^1(\R)} |\opa{\hat{\delta}}|^2_{{\hat{\delta}}^{2-\eta} \omega}(z) \\
& = j^2 \| \chi' \|^2_{\mathcal{C}^1(\R)} \max_{0 \neq v \in T^{1,0}_z X} \frac{|\pa{\hat{\delta}}(v)|^2}{\eta ({\hat{\delta}}(z) i\pa\opa(-{\hat{\delta}})(v,v) + |\pa{\hat{\delta}}(v)|^2)} \\
& \to j^2 \| \chi'\|^2_{\mathcal{C}^1(\R)} \frac{1}{\eta} \text{\quad as $z \to \pa\Omega$.}
\end{align*}
Therefore, $|\opa\chi_j|^2_\omega \lesssim j^2 {\hat{\delta}}^{2-\eta}$ near $\pa\Omega$. 
Since the Levi form of $\pa\Omega$ has $\ell$ zero eigenvalues, we can estimate it with Lemma \ref{volumedev} as: 
\begin{eqnarray*}
\int_\Omega \vert \opa\chi_j\vert^2_\omega {\hat{\delta}}^{\eta}dV_\omega 
& \lesssim & \int_{\lbrace{\hat{\delta}} \leq \frac{1}{j}\rbrace} j^2 {\hat{\delta}}^{2-\eta} {\hat{\delta}}^{\eta} {\hat{\delta}}^{n\eta -2 - (n-\ell-1)} dV_{\omega_{0}} \\
& =   & \int_{\lbrace{\hat{\delta}} \leq \frac{1}{j}\rbrace} j^2 {\hat{\delta}}^{1 + n\eta - (n-\ell)} dV_{\omega_{0}} \\
& \lesssim & j^{2 - (2 + n\eta - (n-\ell))}     \\
& =        & j^{-n\eta + n-\ell} \to 0
\end{eqnarray*}
as $j \to \infty$ since $-n\eta + n-\ell < 0$ by the assumption $\eta > \frac{n-\ell}{n}$.

Therefore, $\int_\Omega u\wedge\alpha\opa\chi_j$ converges to $0$ when $j$ tends to infinity. Equation (\ref{toshow}) follows. 
\end{proof}

%%%%%%%%%%%%%%%%%%%%%%%%%%%%%%%%%%%%%%%%%%%%%%%%%%%%%%%%%%%%%%%%%%%%%%%%%%%%%%%%%%%
\section{Proof of the main theorem}

The proof of Main Theorem easily follows from Theorem \ref{vanishing} using a duality argument.

\begin{proof}[Proof of Main Theorem]
Assume by contradiction  that the Levi-form of the boundary $\pa \Omega$ has $\ell$ zero eigenvalues, and assume that $\eta(\Omega) > \frac{n-\ell}{n}$.
Let $f\in\mathcal{D}^{n,n}(\Omega)$ be a smooth form of top degree with compact support in $\Omega$ satisfying $\int_\Omega f=1$. 
Applying Theorem \ref{vanishing}, we can find a current $T\in\mathcal{D}^\prime_{0,1}(X)$ satisfying $\opa T = f$ in the current sense. 
Let $\chi$ be a compactly supported smooth function on $X$ which is equal to one on $\ol{\Omega}$. But then
$$1= \int_\Omega f = \langle f,{\chi} \rangle = \langle T,\opa \chi \rangle = 0.$$
This contradiction proves that $\eta(\Omega) \leq \frac{n-\ell}{n}$.
\end{proof}

\end{document}